\newcommand{\vekk}[1]{}
\newcommand{\Renyi}{Rényi }
\begin{document}

%\begin{frontmatter}

\title{Statistics with improper posteriors}
%  \title{Mathematical statistics with improper posteriors}
%  \runtitle{Improper posteriors}
  
% \title{Improper posteriors}
% \title{Improper conditional probability}
%\title{Improper posteriors are not improper}
% Egen artikkel : \title{Sampling from improper posteriors}
\author{Gunnar Taraldsen, Jarle Tufto, and Bo H. Lindqvist \\
  \\
  Department of Mathematical Sciences \\
  Norwegian University of Science and Technology
}

%\runauthor{Taraldsen, Tufto, and Lindqvist}

\maketitle

\begin{abstract}

  In 1933 Kolmogorov constructed a general theory that
  defines the modern concept of conditional probability.
  % in his axiomatic theory for probability.
  In 1955 Rényi fomulated a new axiomatic theory for probability motivated
  by the need to include unbounded measures.
  We introduce a general concept of conditional probability in Rényi spaces.  
  In this theory improper priors are allowed,
  and the resulting posteriors can also be improper.
  
  \vekk{
  In 1965 Lindley published his classic text on Bayesian statistics using the theory of Rényi,
  but retracted this idea in 1973 due to the appearance of marginalization paradoxes presented
  by Dawid, Stone, and Zidek.
  The paradoxes are investigated, and the seemingly conflicting results are explained.
  The theory of Rényi can hence be used as an axiomatic basis for statistics that
  allows use of unbounded priors.
  }
  
  {\it Keywords:} {\bf
Foundations and philosophical topics (62A01),
Bayesian inference (62F15),
Bayesian problems and characterization of Bayes procedures (62C10),
    Haldane's prior; Poisson intensity; Marginalization paradox; Measure theory;
  conditional probability space; axioms for statistics; conditioning on a sigma field; improper prior}
\end{abstract}

%\end{frontmatter}

\tableofcontents

\newpage

\section{Introduction}
\label{sIntroduction}

An often voiced criticism of the use of improper priors in Bayesian inference 
is that such priors sometimes don't lead to a proper posterior distribution.
This can happen when the marginal law of the data $X$ is not
$\sigma$-finite \citep{TaraldsenLindqvist10ImproperPriors},
as sometimes encountered in applied settings with sparse data
\citep{BordBiocheDruilhet18,TuftoEtAl12butterflies}.

The dangers of improper posteriors in Markov Chain Monte Carlo based methods
of inference are well recognized \citep[e.g.][]{HobertCasella96}.
Within the theory to be presented here,
improper posteriors as such are well-defined, however,
and in practical applied statistics it will be of interest to
develop numerical methods for computing such posterior densities.
One possible method is indicated by \citet[Appendix S4]{TuftoEtAl12butterflies} 
for data on tropical butterflies,
and is illustrated in Fig~\ref{figImproperPosterior}.
\begin{figure}
    \centering
    \includegraphics[width=.8\textwidth]{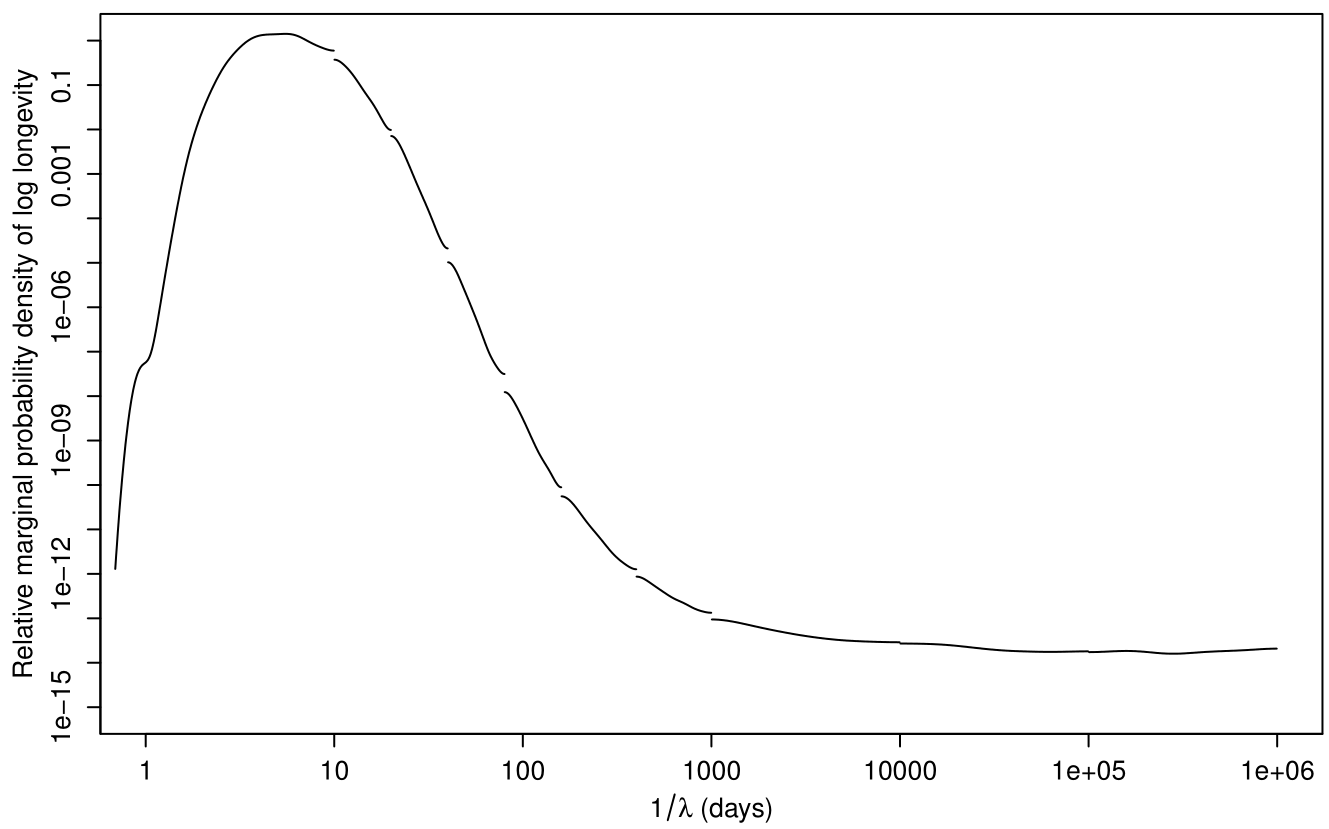}
    \caption{An estimate of the log of an improper posterior density.
      It is obtained by alignment of kernel density estimates based on separate MCMC run,
      each run restricted to different subintervals.} 
    \label{figImproperPosterior}
\end{figure}
The key idea is to consider the family of posteriors obtained from restriction to intervals,
and then glue the resulting posteriors together in a postprocessing step.
This simple idea is also the key for the general
definition of the posterior we introduce in Section~\ref{sTheory}.
The definition is based on the family of conditional probabilities
appearing in the axioms of
a conditional probability space as introduced by \citet{Renyi55axioms}.

As a simpler motivating example,
suppose you observe a homogeneous Poisson process with a scale invariant prior density
\citep[p.122]{JEFFREYS}
\be{1}
\pi (\lambda) = \frac{c}{\lambda}
\ee
on the Poisson intensity $\lambda$.
The constant $c$ is arbitrary, carries no information,
and $c=1$ is used below.
Similar arbitrary constants will, however, play an important role in
the theory in later parts of this paper.
The marginal law of the number $X$ of events in the interval $(0,t]$
is then not $\sigma$-finite since
\be{2}
P(X = 0) =\int_0^\infty P(X=0 | \lambda) \pi(\lambda)\, d \lambda
= \int_0^\infty \frac{(\lambda t)^0}{0!} e^{-\lambda t} \, \frac{d\lambda}{\lambda} = \infty
\ee
If you observe $X=0$ and formally multiply
the prior by the likelihood you obtain an improper posterior
\be{3}
\pi (\lambda | X=0) = \frac{e^{-\lambda t}}{\lambda}
\ee
This posterior law for $\lambda$ is different from the initial prior law, 
and we claim that this is a correct way of incorporating the
information given by $X=0$.
High values for $\lambda$ are less probable given the observation $X=0$.
Further updating can be done with this posterior as a prior,
and this is consistent with only one updating based on the initial prior.

A related example 
is the Beta posterior density 
for the success probability $p$ given by 
\be{4}
\pi (p \st x) = p^{x - 1} (1-p)^{n - x - 1}
\ee
for a Bernoulli sequence with $x$ successes out of $n$ trials.
This corresponds to the improper \citet{Haldane32prior} prior \citep[p.123]{JEFFREYS}  
\be{4a}
\pi (p) = p^{- 1} (1-p)^{- 1}
\ee
The posterior is improper if $x$ is zero as in the previous example.
% High values for $p$ are less 
% probable given the observation $X=0$.
In all cases, however, 
the observation of the number of successes $x$ results in a 
corresponding updating of the uncertainty associated with $p$.
The posterior is in this case improper for 
$x=0$ and for $x=n$.
In all cases, however, the posterior in equation~(\ref{eq4}) contains the
information given by the observation $x$ and 
the prior in equation~(\ref{eq4a}).

The Haldane prior is the prior that corresponds to the formal
Bayes estimator $\hat{p} = x/n$ \citep[p.29]{ROBERT}.
This is the optimal frequentist estimator
in the sense of being the unique uniformly minimum variance unbiased estimator. 
Similar optimality phenomena motivates the use of improper priors
more generally \citep[p.409]{BERGER}, and is also linked to fiducial inference
\citep{TaraldsenLindqvist13fidopt}.

Unfortunately, 
even people accepting the use of improper priors reject the above
form of inference,
on the ground that the posterior is not a probability distribution,
and a mathematical theory is lacking for this
\citep{RobertChopinRousseau09jeffreys}.
% for the original arguments of \citet{}.
% conditioning on non $\sigma$-finite random variables have no meaning.
This is understandable, and we agree initially with this point of view. 
We will demonstrate, however, 
that the above forms of, so far,
formal inference can be made consistent with
the axiomatic system of Rényi which allows improper laws.
In Section~\ref{sTheory} we develop his mathematical theory
further to include general conditioning on a $\sigma$-field.
This gives a rigorous
mathematical foundation for inference
with unbounded laws - including the previous three examples.
The aim of this paper is to present key elements in a mathematical and philosophical  
theory of statistics that allows improper laws both as priors and posteriors
based on the concept of a \Renyi space $(\Omega, \ceps, \pr)$ as defined in the next section.

\vekk{
This extends the results of 
\citet{TaraldsenLindqvist10ImproperPriors,TaraldsenLindqvist16renyi},
and provides stronger links to the results on improper
laws presented by \citet{HARTIGAN} and \citet{BiocheDruilhet16convergence}.
The main mathematical result is Theorem~\ref{theo1}
in  Section~\ref{sTheory}
which proves existence and uniqueness of conditional expectation
on Rényi spaces.
The existence and uniqueness proof relies on the Radon-Nikodym theorem
and a generalization of the Rényi structure theorem.
Theorem~\ref{theo1} can alternatively  be seen as a generalisation of
the Radon-Nikodym theorem itself. 
\vekk{The theory of conditional expectation has been most important
for the development of measure theory, probability and statistics
based on Kolmogorov's concept of a probability space.
The generalization of this to the setting of Rényi spaces can hence
be expected to be important for future developments
in mathematics and related fields.
}

Section~\ref{sMathStat} presents some mathematical preliminaries for the
main results in Section~\ref{sTheory}
including in particular the definition of a Rényi space.
Section~\ref{sExamples} presents some examples,
including a discussion of the  marginalization paradoxes \citep{DawidStoneZidek73}.
Section~\ref{sDiscussion}
gives a discussion of concepts from a modelling
and more philosophical perspective.
The paper is rounded off with some final remarks
in Section~\ref{sRemarks}.
}

\section{Statistics in \Renyi space}
\label{sMathStat}
The mathematical theory of
statistics is thoroughly presented by \citet{SCHERVISH}.
% which can be consulted for further reading.
We will next present the initial ingredients in this theory.
% but with some differences in notation.
The purpose is to have a platform for a generalization
that effectively replaces the probability space of \citet{KOLMOGOROV} with
the conditional probability space of \citet{RENYI}.
The reader may feel that we include too
many elementary standard definitions, and we apologise for this.
The reason is that there are small differences in most books on foundations,
and the easiest way to be precise is to be explicit.
For measure theory we follow mostly the conventions in the elegant
treatment by \citet{RUDIN}.
A particular interpretation is indicated together with the mathematical theory,
but the reader should recognise that many other interpretations are possible.
We do not claim that there is one single ``correct'' interpretation,
but we do claim that the indicated philosophical
interpretation is useful in many applied concrete problems.

The initial ingredient is an abstract 
underlying space 
$\Omega$. % with elements $\omega$.
This space $\Omega$ is a non-empty set equipped with a
law $\pr$ which assigns a weight 
$\pr (A)$ to all measurable sets $A \subset \Omega$.
%where $\ceps$ is a family of subsets of $\Omega$.
The family $\ceps$ of measurable sets is assumed to be a $\sigma$-field:
(i) $\emptyset \in \ceps$,
(ii) $A \in \ceps$ implies $A^c \in\ceps$, and
(iii) $A_1, A_2, \ldots \in \ceps$ implies $\cup_i A_i \in \ceps$.
% It follows in particular from (i) and (ii) that $\Omega$ is an event.
The set $\Omega$ equipped with the family $\ceps$ of measurable sets is then a measurable space
\citep[p.8]{RUDIN}.
A measurable set $A$ is also referred to as an event with
a corresponding philosophical interpretation \citep[p.1-37]{RENYI}.

The law $\pr$ is a positive measure defined on $\ceps$:
(i) $\pr (\emptyset) = 0$,
(ii) $A \in \ceps$ implies $0 \le \pr (A) \le \infty$,
(iii) If $A_1, A_2, \ldots \in \ceps$ are disjoint, then
$\pr (\sum_i A_i) = \sum_i \pr (A_i)$.
Property (iii) is referred to as countable additivity.
It is the distinctive feature that separates the theory here from the
alternative approach of including improper laws by allowing
finitely additive measures \citep{HeathSudderth89} \citep[p.21]{SCHERVISH}.
Additionally, in the theory of Kolmogorov, $\pr (\Omega) = 1$ is assumed.
The set $\Omega$ equipped with $\ceps$ and $\pr$ is then a probability space.
This is assumed in the following paragraphs until the concept of a \Renyi space is introduced.

\vekk{
We consider choice of notation to be important,
and are aware that the reader may have strong opinions on this,
but postpone a discussion of this until Section~\ref{sDiscussion}.
}

A random quantity $Z$
%in a measurable space $\Omega_Z$
is a measurable function 
$Z: \Omega \into \Omega_Z$ \citep[p.583, p.606]{SCHERVISH}.
A function $Z$ is measurable if
$(Z \in A) = Z^{-1} (A) = \{\omega \st Z(\omega) \in A\}$ is measurable 
for any measurable $A \subset \Omega_Z$.
Using this notation,
the law $\pr_Z$ of $Z$ is well defined by
\be{LawZ}
\pr_Z (A) = \pr (Z \in A)
\ee
Another random quantity $W = \phi (Z)$ is defined by
$W (\omega) = \phi(Z(\omega))$ when
$\phi: \Omega_Z \into \Omega_W$ is measurable.
This implies $\pr_W = \pr_Z \circ \phi^{-1}$ so
the law % $\pr_W$
of $W = \phi (Z)$
is determined by the law % $\pr_Z$
of $Z$.
The general change-of-variables theorem
$\E (\phi (Z)) = \E_Z (\phi)$ is also a consequence \citep[Thm B.12]{SCHERVISH}.
The notation $\E (W) = \int W(\omega)\, \pr (d\omega)$ is here used for the
expectation of $W$.
These observations explain partly why the abstract space $\Omega$ % and $\pr$
can be left unspecified in applications.

The previous paragraph defines the law $\pr_T$
of a random quantity $T$.
The notation
$\pr^t (A) = \pr (A \st T = t)$
is here used for the conditional law $\pr^t$ on $\Omega$. 
It is defined by the equation
\be{TheCondExp}
\pr ((T \in C) A) = \int_C \pr^t (A) \, \pr_T (dt)
\ee
The left-hand-side defines for each event $A$
a measure on $\Omega_T$
which is absolutely continuous with respect to $\pr_T$,
and $h(t) = \pr^t (A)$ is the unique density
$h \in L^1 (\pr_T)$ obtained from
the Radon-Nikodym theorem \citep[p.121]{RUDIN}.
The conditional law $\pr_Z^t$ of $Z$ given $T = t$ is defined by
\be{CondLawZ}
\pr_Z^t (A) = \pr^t (Z \in A) = \pr (Z \in A \st T=t)
\ee
The general change-of-variables theorem implies
that $\pr_Z^t (A) = (\pr_{Z,T})^t (A \times \Omega_T)$
so the conditional law $\pr_Z^t$ is determined
by the joint law $\pr_{Z,T}$.

The previous gives some basic ingredients from probability theory needed in a
mathematical theory of statistics.
For the theory of statistics \citet[p.82]{SCHERVISH} assumes,
as we also do,
that there is a single space $\Omega$
underlying also the statistical analysis
for a particular model $\theta$ with observed data $x$.
The data $X$ and the model $\Theta$ are random quantities.
This means that
$X: \Omega \into \Omega_X$ and $\Theta: \Omega \into \Omega_\Theta$
are measurable functions.
The data space $\Omega_X$ is the set corresponding
to the possible observations $x$.
It is commonly referred to as the sample space.
The model space $\Omega_\Theta$ is the set of
possible model parameters $\theta$.
It is sometimes referred to as the model parameter space.
A parameter $\Gamma = \psi (\Theta)$ is by definition a function of
the model $\Theta$.
It is hence also consistent to refer to $\Theta$ as the model parameter.
A statistic $Y = \phi (X)$ is by definition a function of the data $X$.
The function $\phi: \Omega_X \into \Omega_Y$ is sometimes referred to
as an action, and the set $\Omega_Y$ is then the action space.
The parameter $\Gamma$ is sometimes referred to as the focus parameter, and the set $\Omega_\Gamma$ is then the focus space.
These concepts are as explained in much more detail by \citet{SCHERVISH},
but with some differences in notation and naming conventions.
The involved concepts are illustrated in the commutative diagram in Figure~{\ref{figStatMod}}.
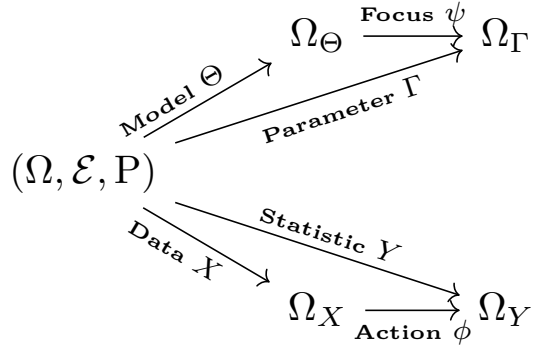
\begin{figure}% [!ht]

{%\huge \hspace{0ex}
  \[
    \scalebox{1.5}{
  \begin{tikzcd}[row sep=normal, ampersand replacement=\&]
%\uncover<1->{ 
\&\Omega_\Theta
\arrow[r, "\text{\bf\tiny Focus } \psi"] \& \Omega_\Gamma  \\
%}
%\uncover<2->{ 
(\Omega, {\cal E}, \pr)
\arrow[sloped, ur, "\text{\bf\tiny Model } \Theta" near end]
\arrow[sloped, drr, "\text{\bf\tiny Statistic } Y" near start]
\arrow[sloped,dr, "\text{\bf\tiny Data } X"' near end]
\arrow[sloped, urr, "\text{\bf\tiny Parameter } \Gamma"' near start] \& \& \\
%}
%\uncover<3->{ 
\&\Omega_X
\arrow[sloped, r, "\text{\bf\tiny Action } \phi"'] \& \Omega_Y
%}
\end{tikzcd}
}
\]
}
\caption{\label{figStatMod}
  A commutative diagram for a general statistical inference problem.}
\end{figure}

A statistical model for observed data $x$ is conventionally specified by
a family $\{\pr_X^\theta \st \theta \in \Omega_\Theta\}$
of probability measures $\pr_X^\theta$ indexed by the unknown model parameter $\theta$
\citep[p.1]{LehmannCasella98Estimation}.
We assume additionally, as does \citet[p.83]{SCHERVISH},
that the statistical model is given by the conditional law
\be{SchervishStatMod}
\pr_X^\theta (A) = \pr (X \in A \st \Theta = \theta)
\ee
of the data $X$ given $\Theta = \theta$. 
This requires also a specification of
the data space $\Omega_X$ and the model space $\Omega_\Theta$.
The task for the statistician is to infer
something about a chosen focus parameter
$\gamma = \psi(\theta) = \Gamma (\omega)$
from the observed data $x = X (\omega)$.
This is done by reporting a statistic $y = \phi (x)$.
The problem is then to choose or characterise a suitable action $\phi$,
and to implement and perform associated calculations. 

In Bayesian inference the prior $\pr_\Theta$ is also specified,
and together with the conditional law $\pr_X^\theta$
this determines the joint law of $X$ and $\Theta$.
The joint law of the data $X$ and the model $\Theta$
determines the
posterior law $\pr^x_\Theta$.
The simplicity and generality of this transformation of
prior knowledge $\pr_\Theta$ into the
posterior knowledge $\pr^x_\Theta$ given the data $x$
is one major argument in favour of the Bayesian paradigm. 
Additionally, it can be observed that the Bayes posterior expectation
$\phi(x) = \E^x (\Gamma)$ exemplify that the Bayes posterior
can be used to define many possible actions $\phi$ in addition
to the distribution estimator given by
the posterior law $\phi_D (x) = \pr_\Theta^x$ itself.

The previous paragraphs give a condensed presentation of some of
the initial ingredients in the well established mathematical theory of
statistics as presented in considerable more detail by
\citet{SCHERVISH}.
We now turn to the more general case where
$\Omega$ is a \Renyi space as will be defined in the next few paragraphs. % \citep{TaraldsenLindqvist16renyi}.
Assume first that $(\Omega, \pr, \ceps)$ is a $\sigma$-finite measure space.
Let $\cB \subset \ceps$ denote the family of elementary conditions
$B$ defined by the requirement $0 < \pr (B) < \infty$.
A family $\{\pr (\cdot \st B) \st B \in \cB\}$ 
of conditional probability measures is then defined by
\be{RenyiCond}
\pr (A \st B) = \frac{\pr (A B)}{\pr(B)}, \; \forall A \in \ceps, \; \forall B \in \cB
\ee
It can be verified that 
$\cB$ is a bunch \citep[Def.2.2.1]{RENYI}:
(i) $\emptyset \not\in \cB$,
(ii) $B_1, B_2 \in \cB$ implies $B_1 \cup B_2 \in \cB$,
(iii) There exists a sequence $B_1, B_2, \ldots \in \cB$
with $\Omega = \cup_i B_i$.
Condition (iii) follows since $\pr$ is $\sigma$-finite.
Furthermore,
$B_1, B_2 \in \cB$ and $B_1 \subset B_2$ 
imply $\pr (B_1 \st B_2) > 0$, 
and imply also the consistency requirement
\be{RenyiSpace}
\pr (A \st B_1) = \frac{\pr (A B_1 \st B_2)}{\pr (B_1 \st B_2)}
\ee
This shows that a $\sigma$-finite measure $(\Omega, \ceps, \pr)$
generates a conditional probability space
$(\Omega, \ceps, \cB, \pr)$.
A conditional probability space is a measurable
space equipped with a consistent family of
conditional probabilities indexed by a bunch
\citep[Def.2.2.2]{RENYI}.
The \Renyi structure theorem shows that
every conditional probability space is generated
by a corresponding $\sigma$-finite measure
\citep{RENYI,TaraldsenLindqvist16renyi}.
It should be noted, however,
that the above construction given
by equation~(\ref{eqRenyiCond}) gives a maximal bunch
and then a maximal family of
conditional probabilities.
Consequently,
every conditional probability space can be extended
to a maximal conditional probability space.

It can be noted that the family of conditional probabilities
$\pr (A \st B)$ and the family $\cB$ of elementary conditions
are unchanged if $\pr$ is replaced by $c \pr$ where
$c$ is a positive constant.
The \Renyi state defined by $\pr$ is
the equivalence class $[\pr] = \{c \pr \st c > 0\}$.
The measures $\pr$ and $c \pr$ are equivalent
when interpreted as \Renyi states,
and the conditional probabilities $\pr (A \st B)$ give the
philosophical interpretation in statistical models.  
A \Renyi space is here defined to be a measurable
space equipped with a \Renyi state.
It corresponds to a conditional probability space where the
bunch is maximal.
Our definition here of a \Renyi space is equivalent with
the definition of a full conditional
probability space as used by \citet[p.43]{RENYI}.
We will follow conventional abuse of notation
and use the same symbol $\pr$ for the equivalence class,
a representative $\sigma$-finite measure, and the family of conditional measures. 

Consider now again the commutative diagram in
Figure~\ref{figStatMod} corresponding to a general
statistical inference problem.
It can be interpreted as before
also when $\Omega$ is assumed to be a \Renyi space.
A random quantity $Z$ is a measurable function.
It is said to be $\sigma$-finite if the law
$\pr_Z$ is $\sigma$-finite.
The $\sigma$-finite functions define the natural
arrows in the category of \Renyi spaces.
In the case $\Omega_Z = \RealN$ our definition
of $Z$ being $\sigma$-finite is equivalent with
$Z$ being a regular random variable as defined by
\citet[p.73]{RENYI}.

The prior $\pr_\Theta (A) = \pr (\Theta \in A)$
defines a \Renyi state if $\Theta$ is $\sigma$-finite. 
The interpretation is in terms of the
conditional probabilities $\pr_\Theta (A \st B)$
for $B \in \cB_\Theta = \{B \st 0 < \pr_\Theta (B) < \infty \}$.
If the variable $Z = (X, \Theta)$ and the
data $X$ are $\sigma$-finite,
then the posterior $\pr^x_\Theta$ is well defined with
$\pr_\Theta^x (\Omega_\Theta) = 1$.
This is discussed and exemplified by
\citet{TaraldsenLindqvist10ImproperPriors}
and \citet{LindqvistTaraldsen18proper}.
In the next section this theory will
be generalised so that the posterior $\pr^x_\Theta$
is also allowed to be a conditional \Renyi state as needed for
the butterfly,  Poisson process, and Bernoulli examples in Section~\ref{sIntroduction}

%\section{Existence and uniqueness of conditional expectation}
\section{Improper posteriors as conditional \Renyi states}
\label{sTheory}

\citet{TaraldsenLindqvist10ImproperPriors,TaraldsenLindqvist16renyi}
define the posterior law
$\pr^x (A) = \pr (A \st X=x)$ for the case where the
data $X$ is $\sigma$-finite.
The aim now is to prove existence and uniqueness
of a posterior law without assuming that $X$ is $\sigma$-finite.
The simple idea in the following is to define
$\pr^x (A)$ from a family $\pr^x (A \st B)$ indexed
by the elementary conditions $B \in \cB$.
The later is defined by the family of conditional
probabilities $\pr (A \st B)$ defined by the \Renyi state $\pr$.
It is assumed throughout this Section that
$(\Omega, \cE, \pr)$ is a \Renyi space,
and that all random quantities are defined on this space.  
The bunch $\cB$ associated to $\Omega$ is the family
of events $B \subset \Omega$ defined by the requirement
$0 < \pr (B) < \infty$.

Assume that $T$ is a random quantity.
If $B \in \cB$, then
\be{TheCondExpImp}
\pr ((T \in C) A \st B) = \int_C \pr^t (A \st B) \, \pr_T (dt \st B)
\ee
defines $\pr^t (A \st B)$ similarly to how
$\pr^t (A)$ was defined by equation~(\ref{eqTheCondExp}).
The left-hand-side defines for each event $A$
a measure on $\Omega_T$
which is absolutely continuous with respect to $\pr_T (dt \st B)$,
and $h(t) = \pr^t (A \st B)$ is the unique density
$h \in L^1 (\pr_T (dt \st B))$ obtained from
the Radon-Nikodym theorem \citep[p.121]{RUDIN}.

If $X$ is a random quantity, then
the previous defines a family of posterior laws
$\pr^x (\cdot \st B)$ indexed by $B \in \cB$.
This is the necessary ingredient for the interpretation
of a posterior law.
This family is taken as the definition of the posterior law $\pr^x$.
The construction holds also more generally
for a conditional probability space with an arbitrary bunch.
In the following we restrict attention to \Renyi spaces.
The posterior law defines then a conditional \Renyi state.

The next aim is to prove existence of a posterior law $\pr^x$ directly,
and show that
\be{eqMAIN}
\pr^x (A B) = \pr^x (A \st B) \pr^x (B), \;\;\;
\forall A \in \ceps, \forall B \in \cB, \forall' x \in \Omega_X
\ee
This generalization of the structure theorem of
\Renyi is the main result given below in Theorem~\ref{theo1}.
Its precise statement requires some more definitions.

A $\sigma$-finite measure $Q_T$
is by definition a pseudo-law of a random
quantity $T$ if $\{C \st Q_T (C) = 0\} = \{C \st P_T (C) = 0\}$.
If $\tilde{Q}_T$
is another pseudo-law,
then the Radon-Nikodym theorem gives existence
of a unique $c > 0$ in $L^1_{\text{loc}} (Q_T)$ such that 
$\tilde{Q}_T (dt) = c(t) Q_T (dt)$.
Existence of a pseudo-law follows by
defining $Q_T (C) = Q (T \in C)$ where
$Q$ is a probability measure such that
$P (d\omega) = W (\omega) Q (d\omega)$ with $W > 0$
\citep[6.9 Lemma]{RUDIN}.
Given a pseudo-law $Q_T$,
or more generally a law $Q_T$ that dominates $\pr_T$,
we define the conditional law $\pr^t$ by the relation
\be{TheCondExpImp2}
\pr ((T \in C) A) = \int_C \pr^t (A) \, Q_T (dt)
\ee
The left-hand-side defines for each event $A$
a measure on $\Omega_T$
which is absolutely continuous with respect to $Q_T$,
and $h(t) = \pr^t (A)$ is the unique density
% $h \in L^1_{\text{loc}} (Q_T)$
obtained from
the Radon-Nikodym theorem \citep[p.121, p.123]{RUDIN}.
If $c(t) > 0$, then the previous shows
that $c (t) \pr^t (d\omega)$ is the conditional law
corresponding to the pseudo-law $Q_T (dt) / c(t)$.
This defines an equivalence between conditional laws,
and defines the unique conditional \Renyi state $\pr^t$
as an equivalence class.
The main mathematical result can now be stated.
\begin{theo}
\label{theo1}  
A random quantity $T$
determines a unique conditional \Renyi state $\pr^t$,
and a unique family of conditional \Renyi states
$\pr^t (\cdot \st B)$ for $B \in \cB$
such that $\forall A \in \ceps$
\be{TheRenyiPseudoLink}
\pr^t (A B) = \pr^t (A \st B) \pr^t (B), \;\;\; \forall' t \in \Omega_T
\ee
\end{theo}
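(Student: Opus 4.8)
\emph{Plan.}
The plan is to reduce the statement to the ordinary (Kolmogorov) disintegration theorem applied to an auxiliary \emph{probability} measure equivalent to $\pr$, and then to track how multiplication by a strictly positive weight turns that disintegration into the desired conditional \Renyi state, so that the generalized structure identity (\ref{eqTheRenyiPseudoLink}) becomes the ordinary \Renyi construction $\pr^t(A\st B)=\pr^t(AB)/\pr^t(B)$ applied fibrewise.

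\emph{Existence of $\pr^t$.}
Fix a $\sigma$-finite representative of the \Renyi state, again called $\pr$, and choose $W_0$ with $0<W_0$ and $\int_\Omega W_0\,d\pr<\infty$; this is possible precisely because $\pr$ is $\sigma$-finite. Then $Q(d\omega)=W_0(\omega)\,\pr(d\omega)/\pr(W_0)$ is a probability measure with $Q\sim\pr$, and $\pr(d\omega)=W(\omega)\,Q(d\omega)$ with $W=\pr(W_0)/W_0>0$. Put $Q_T(C)=Q(T\in C)$; since $Q\sim\pr$ this is a pseudo-law of $T$. Let $Q^t$ be a conditional law of $\omega$ given $T=t$ under $Q$, obtained from the ordinary construction behind equation~(\ref{eqTheCondExp}), and set $\pr^t(A):=\int_A W(\omega)\,Q^t(d\omega)$ for $A\in\ceps$. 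For $Q_T$-almost every $t$ this is a measure in $A$, and it is $\sigma$-finite for those $t$ because $\int_\Omega W_0\,\pr^t(d\omega)=\int_\Omega W_0W\,Q^t(d\omega)=\pr(W_0)\,Q^t(\Omega)=\pr(W_0)<\infty$ while $W_0>0$. Disintegrating $Q$ and using that $Q^t$ is carried by $\{T=t\}$ yields $\pr((T\in C)A)=\int_C\pr^t(A)\,Q_T(dt)$, so $\pr^t$ satisfies (\ref{eqTheCondExpImp2}) and $[\pr^t]$ is a conditional \Renyi state. Its uniqueness as an equivalence class follows from uniqueness in the Radon--Nikodym theorem together with the already-noted fact that replacing the pseudo-law $Q_T$ by $c(t)\,Q_T$ rescales $\pr^t$ by $1/c(t)$.

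\emph{The structure identity and the family $\pr^t(\cdot\st B)$.}
Fix $B\in\cB$. Taking $A=B$ and $C=\Omega_T$ in (\ref{eqTheCondExpImp2}) gives $\int_{\Omega_T}\pr^t(B)\,Q_T(dt)=\pr(B)<\infty$, hence $\pr^t(B)<\infty$ for $Q_T$-a.e.\ $t$, and $\pr^t(B)>0$ on a set of positive $Q_T$-measure (otherwise $\pr(B)=0$). On the set where $0<\pr^t(B)<\infty$ --- i.e.\ where $B$ is an elementary condition for the $\sigma$-finite measure $\pr^t$ --- define $\pr^t(A\st B):=\pr^t(AB)/\pr^t(B)$; this is the \Renyi conditional generated by $\pr^t$, it is a probability measure in $A$, and (\ref{eqTheRenyiPseudoLink}) holds by construction, while where $\pr^t(B)=0$ the identity is trivial since $\pr^t(AB)\le\pr^t(B)=0$, so $\pr^t(\cdot\st B)$ may be assigned arbitrarily there. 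That this agrees with the object defined through the disintegration (\ref{eqTheCondExpImp}) is seen by running the existence argument on the probability measure $\pr(\cdot\st B)$: its $T$-marginal is $(\pr^t(B)/\pr(B))\,Q_T(dt)$ and its conditional law is $A\mapsto\pr^t(AB)/\pr^t(B)$. Uniqueness of the family is then immediate: any $\hat{\pr}^t(\cdot\st B)$ satisfying (\ref{eqTheRenyiPseudoLink}) must equal $\hat{\pr}^t(AB)/\hat{\pr}^t(B)=\pr^t(AB)/\pr^t(B)$ for $Q_T$-a.e.\ $t$ with $\pr^t(B)>0$, the ratio not depending on the chosen representative of $\pr^t$.

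\emph{Main obstacle.}
The main obstacle is not a hard inequality but the measure-theoretic bookkeeping hidden in the ``$\forall' t$'' clauses. One must know that $Q^t$ can be taken to be a genuine (regular) conditional distribution, so that $\pr^t$ is honestly a measure in $A$ for almost every $t$ and $t\mapsto\pr^t(\cdot)$ is a kernel; this is exactly the point where one relies on the standard disintegration theory already invoked in Section~\ref{sMathStat}. The remaining bookkeeping --- combining, for each fixed $A$ and $B$, the various $Q_T$-null exceptional sets --- is routine because the theorem asserts the identity only for each $A$ and $B$ separately. The one genuinely new device is the use of a strictly positive $W_0\in L^1(\pr)$ to certify that the fibre measures $\pr^t$ are $\sigma$-finite, which is what upgrades ``$\pr^t(A)$ is an $L^1$ density'' to ``$[\pr^t]$ is a conditional \Renyi state''.
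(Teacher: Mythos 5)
Your construction hinges on a regular conditional distribution $Q^t$ of $Q$ given $T$, and that is a genuine gap at the level of generality the theorem is stated in. The ``ordinary construction behind equation~(\ref{eqTheCondExp})'' does not deliver a kernel: for each fixed event $A$ it produces, via the Radon--Nikodym theorem, a density $t \mapsto Q^t(A)$ determined only up to a $Q_T$-null set depending on $A$; it does not produce, for almost every $t$, a countably additive measure $A \mapsto Q^t(A)$ unless one imposes separability/regularity assumptions on the measurable spaces, and such assumptions are precisely what the paper avoids --- the remark immediately after Theorem~\ref{theo1} advertises that the per-event definition works for a completely general $\Omega_T$, in contrast to the kernel approach which needs separability \citep[p.616, Prop.B.24]{SCHERVISH}. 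Since there are measurable spaces admitting no regular conditional probability, your definition $\pr^t(A) := \int_A W\, Q^t(d\omega)$, the $\sigma$-finiteness certificate $\int W_0\, d\pr^t = \pr(W_0)\,Q^t(\Omega) < \infty$, and the fibrewise ratio $\pr^t(A \st B) := \pr^t(AB)/\pr^t(B)$ all presuppose an object that need not exist. Nothing in Section~\ref{sMathStat} invokes disintegration in this kernel sense: equations (\ref{eqTheCondExp}), (\ref{eqTheCondExpImp}) and (\ref{eqTheCondExpImp2}) are per-event Radon--Nikodym statements, and the theorem's conclusion is correspondingly per-event ($\forall A$, $\forall B$, $\forall' t$ with the exceptional set allowed to depend on $A$ and $B$); the paper does not claim, and does not need, that $\pr^t$ is a measure for almost every $t$.

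The part of your argument that does not use the kernel is, in substance, the paper's actual proof. The paper takes $\pr^t(A)$ as defined per event by (\ref{eqTheCondExpImp2}) and $\pr^t(A \st B)$ as defined per event by (\ref{eqTheCondExpImp}), and then proves (\ref{eqTheRenyiPseudoLink}) by exactly your ``agreement'' computation: from $\pr((T\in C)B) = \int_C \pr^t(B)\, Q_T(dt)$ one gets $\pr_T(dt \st B) = \left[\pr^t(B)/\pr(B)\right] Q_T(dt)$, and comparing the two representations of $\pr((T\in C)A \st B)$ gives $\int_C \pr^t(AB)\, Q_T(dt) = \int_C \pr^t(A \st B)\, \pr^t(B)\, Q_T(dt)$ for all $C$, hence the identity for almost every $t$. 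So the repair is straightforward: drop the detour through $Q^t$ and the claim that the fibres are $\sigma$-finite measures (existence, a.e.-uniqueness, and the equivalence-class statement for $\pr^t$ are already supplied by the Radon--Nikodym theorem and the observed rescaling under $Q_T \mapsto Q_T/c$, as set out before the theorem), and promote your marginal-density computation from a side remark to the main argument; your uniqueness observation for the family $\pr^t(\cdot \st B)$ then stands, read a.e.\ with respect to $\pr_T(dt \st B)$.
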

\begin{proof}
All that remains to prove is equation~(\ref{eqTheRenyiPseudoLink}).  
Observe first that
$\pr_T (C \st B) = \pr ((T \in C) B) / \pr (B)$ and
$\pr ((T \in C) B) = \int_C \pr^t (B) Q_T (dt)$ give
$$\pr_T (dt \st B) = \frac{\pr^t (B)}{\pr (B)} \, Q_T (dt)$$
Using this gives
$\int_C \pr^t (A B) Q_T (dt) / \pr (B)$
$=$
$\pr ((T \in C) A \st B)$
$=$
$\int_C \pr^t (A \st B) \pr_T (dt \st B)$\\
$=$
$\int_C \pr^t (A \st B) \pr^t (B) Q_T (dt) / \pr (B)$,
so
$$\int_C \pr^t (A B) \, Q_T (dt) = \int_C \pr^t (A \st B) \pr^t (B) \, Q_T (dt)$$
and equation~(\ref{eqTheRenyiPseudoLink}) is
proved.
\end{proof}
All of the previous can be repeated with a replacement of
the measurable set $A$ with a positive measurable function
$A: \Omega \into [0,\infty]$ and
$\E^t (A) = \pr^t (A)$.
Conditional
expectation of complex valued functions can be defined
by decomposition in positive and negative parts and then in real 
and complex parts.
Consideration of the dual space gives conditional expectation of
a separable Banach space valued $A: \Omega \into \setB$.
The conditional expectation $\E (A \st T=t)$
is in particular well defined when $A$ takes values in a separable Hilbert space.
Separability is assumed to ensure almost everywhere definition
on $\Omega_T$.

\vekk{ % eventuelt i diskusjonen

%It follows from equation~(\ref{eqTheCondExp})
%that $\pr^t (\Omega) = 1 \in L^1 (\pr_T)$, but
It does not follow that $\pr^t$
in equation~(\ref{eqTheCondExp}) (Kolmogorov case) or in
equation~(\ref{eqTheCondExpImp2})
(\Renyi case)
is a measure for almost all $t$.
The expectation $\E^t (W) = \int W(\omega) \, \pr^t (d\omega)$,
together with a corresponding change-of-variables formula,
can nonetheless be defined alternatively
similarly to how the ordinary integral is defined.
This was demonstrated already by \citet[p.55, eq.11]{KOLMOGOROV}
in his foundational work,
but this seems to have been mostly forgotten by later writers.
We have, in fact, found no reference to this Kolmogorov integral in the literature.
% \citep{LOEVE,DOOB}.
If there exist a version such that $\pr^t$
is a probability for all $t$,
then $\bmu$ defined by $[\bmu (\omega)](A) = \pr^{T(\omega)} (A)$
is a random probability measure on $\Omega$
\citep[p.1]{Kallenberg17randomMeasure}.
Setting $W = \bmu$ gives then an example of a random quantity $W$ where
$\Omega_W$ is the set of probability measures on $\Omega$.
This existence can, however, not be assumed in general,
and it is not necessary as noted by \citet[p.54-55]{KOLMOGOROV}.
The result is then a weak random measure $\bmu$.

A weak random measure $\bmu$ defined on a fixed $\sigma$-field $\cF$
in a measurable space $\setX$ is
defined by the properties:
(i) $\bmu (\emptyset) = 0$,
(ii) $A \in \cF$ implies $0 \le \bmu (A) \le \infty$,
(iii) If $A_1, A_2, \ldots \in \cF$ are disjoint, then
$\bmu (\sum_i A_i) = \sum_i \bmu (A_i)$.
It is assumed that $\bmu (A)$ is a random quantity
for each measurable set $A$,
and all properties above are interpreted as holding
almost surely with respect to the underlying law
$\pr$ of the \Renyi space $\Omega$.
If $X$ and $T$ are random quantities, then
$\pr^T$, $\pr^T (\cdot \st B)$, and $\pr_X^T$ are all
examples of weak random measures.
The latter is defined, as before, by
$\pr^t_X (A) = \pr^t (X \in A)$ and
$\bmu (\omega) = \pr^{T(\omega)}_X$.
The integral $\bmu (W)$ is defined as by
\citet[p.55, eq.11]{KOLMOGOROV}.
Alternatively as mentioned above,
for the case of a conditional expectation,
it can be defined by $\E^t (W) = \pr^t (W)$ and
equation~(\ref{eqTheCondExpImp2}) with
$A$ replaced by a positive measurable function $W$.

The concept of a weak random measure is here introduced
similarly to how \citet[p.1-2]{Skorohod84} introduces the concept of
a strong random operator.
He also defines the notion of a weak random operator by duality,
but this is equivalent with strong when the image space is the complex numbers.
It should be noted that the naming convention here is
counter intuitive in the sense that a strong random operator is a weaker
concept than a random operator,
but there are good reasons for adopting the conventions of Skorohod.

}

Conditional expectation with respect
to a $\sigma$-field $\cT \subset \ceps$
is defined by $\E (W \st \cT) = \E (W \st T)$ where $T (\omega) = \omega$ and
$(\Omega_T, \cE_T) = (\Omega, \cT)$.
It can be noted that we define
$\pr^t (A)$ directly following Kolmogorov instead of more indirectly
by first defining $\pr (A \st \cT)$ as is more common.
This has the advantage
of allowing a completely general measurable space $\Omega_T$,
whereas the common approach requires separability
properties according to \citet[p.616, Prop.B.24]{SCHERVISH}.

\section{Examples}
\label{sExamples}
%\subsection{Mathematical statistics}
%\subsection{Densities}
\subsection{The uniform \Renyi state on \RealN}

The most familiar example of an improper prior is
given by Lebesgue measure $m$ on the real line $\Omega_\Theta = \RealN$
equipped with the family $\ceps_\Theta$ of Borel sets $A$. 
The corresponding \Renyi state is the equivalence class
$\pr_\Theta = [m] = \{c m \st c > 0\}$.
The \Renyi state is equivalently given by the bunch
$\cB_\Theta = \{B \in \ceps_\Theta \st 0 < m (B) < \infty \}$,
and the family of conditional probabilities
$\pr_\Theta (A \st B) = m (A B) / m (B)$ for $B \in \cB_\Theta$
and $A \in \ceps_\Theta$.
This defines a {\em full} conditional
probability space in the sense of \citet[p.43]{RENYI}, or equivalently, in our terminology, a \Renyi space.

In the context of statistical modeling it is furthermore assumed,
as in Figure~\ref{figStatMod},
that $\Theta$ is a random variable defined on the
underlying \Renyi space $(\Omega, \ceps, \pr)$
with $\pr_\Theta (A \st B) = \pr (\Theta \in A B)/\pr (\Theta \in B)$.
The latter can also be written as
$\pr_\Theta = \pr \circ \Theta^{-1}$ or as
$\pr_\Theta (A) = \pr (\Theta \in A)$ where, as always, 
$(\Theta \in A) = \{\omega \st \Theta (\omega) \in A\}$.
These equations are
interpreted as given for representative
measures in the equivalence classes.

The family  $\cB_0 = \{[-n,n] \st n \in \NatN\}$ does not contain the empty set, it is closed under finite unions,
and there is a sequence $B_j \in \cB_0$ with $B_1 \cup B_2 \cdots = \Omega_\Theta$.
The family
$\cB_0 \subset \cB_\Theta$ is therefor a bunch.
The family of probability measures
defined by $m_n (A) = m (A B_n)/m(B_n)$ for $B_n = [-n,n]$ defines
a conditional probability space $(\RealN, \ceps_\Theta, \cB_0, \{m_n\})$
in the sense of \citet[Definition 2.2.2, p.38]{RENYI}.
The \Renyi structure theorem ensures that this space is generated
by a $\sigma$-finite measure,
or equivalently, 
that this conditional probability space can be extended to a unique \Renyi space.
This \Renyi space is given by the \Renyi state $\pr_\Theta$ described in the previous paragraphs.

In applications the uniform law on the real line is often
described as the limit of the probability measures $m_n$ as $n \rightarrow \infty$.
The previous paragraph identifies the uniform law not as a limit,
but as given by the collection $\{m_n \st n \in \NatN\}$ of probability measures itself.
The uniform \Renyi state $m = \pr_\Theta$ can, however,
also be obtained as 
$\lim_n m_n = m$.
Each $m_n$ and $m$ are interpreted as \Renyi states.
The limit can be defined as in the convergence of 
conditional probability spaces defined by \cite[p.57]{RENYI},
but also in the sense of convergence of \Renyi states given by
equivalence classes \cite[p.5015]{TaraldsenLindqvist16renyi}\citep{BiocheDruilhet16convergence}.

The interpretation of $\pr_\Theta$ comes from the definition of a conditional probability space
as discussed in more detail by \citet[p.34-38]{RENYI}.
Given that $\Theta \in B$ the law is the probability 
distribution $\pr_\Theta (\cdot \st B)$ concentrated on $B$.
The interpretation of all these conditional probabilities 
can be, depending on the situation at hand, in a frequentist sense or in a subjective Bayesian sense.
This generalizes to other unbounded laws including the priors and
posteriors for the butterfly, Poisson process, and Bernoulli examples in the Introduction. 
It is most important since it gives
the needed interpretation of the mathematical 
theory in the context of statistical inference.
The same interpretation is in particular 
used for both the prior and the posterior.
They are on an equal footing,
and this is how uncertainty 
is represented in the statistical model.

\subsection{Conditional \Renyi state densities}

Assume that $(X,\Theta) \sim f(x,\theta) \mu (dx) \nu (d\theta)$
for $\sigma$-finite measures $\mu$ and $\nu$.
It follows that
$(\Theta \st X = x) \sim f(x, \theta) \nu (d\theta)$ by choosing $Q_X = \mu$.
This can be verified directly by the defining
equation~(\ref{eqTheCondExpImp2}).
It follows in particular that this is consistent with
the definition of an improper posterior
as used by \citet[p.1716]{BiocheDruilhet16convergence}.
The previous can also be reformulated simply as
\be{CondDens}
f(\theta \st x) = f(x, \theta)
\ee
There is no need for a normalization constant since two
proportional densities are equivalent when considered as
conditional \Renyi states.
The symbol $f$ is used here, and in the following,
as a generic symbol for a density and also for conditional densities.
The arguments $x \in \Omega_X$ and $\theta \in \Omega_\Theta$
give the interpretation as different functions.

Let $c$ be a function with $c (\theta) > 0$.
In the context here this statement is interpreted
as stating that $c: \Omega_\Theta \rightarrow \RealN$ is measurable and that
$\pr (c(\Theta) \le 0) = 0$.
Similar context dependent interpretations are also used elsewhere, but then without further explanation. 
It follows then that
$(X \st \Theta = \theta) \sim c (\theta) f(x \st \theta) \mu (dx)$,
and so
\be{ModDens}
f(\cdot \st \theta) = c(\theta) f(\cdot \st \theta) 
\ee
when interpreted as a conditional \Renyi densities.
We will then also write
$f(x \st \theta) = c(\theta) f(x \st \theta)$
with this interpretation.
The resulting equivalence class of conditional densities
is the conditional \Renyi state density.
These observations are special cases of the discussion before Theorem~\ref{theo1}
leading to the definition of a conditional \Renyi state as an equivalence class.

A formal prior density $f (\theta)$
gives the joint density $c (\theta) f(x \st \theta) f (\theta)$,
and this shows that the interpretation of
$f (\theta)$ as prior information is dubious in this case.
It is only when the density $f (x \st \theta)$ is normalized
that the common procedure of combining a prior density
$f (\theta)$ with the model density $f (x \st \theta)$ into
a resulting joint density
$f (x, \theta) = f (x \st \theta) f(\theta)$
and a posterior density $f (\theta \st x)$ is well defined.
In all cases, however, the posterior density $f (\theta \st x)$
is well defined as a conditional \Renyi state density
from the joint density $f(x, \theta)$ as in equation~(\ref{eqCondDens}).
In general, the problem with the prior arises when the statistical model $\pr_X^\theta$
itself is allowed to be a conditional \Renyi state.
The likelihood $L (\theta \st x) = f(x \st \theta) = c(\theta) f(x \st \theta)$
is not well defined in this case.

A concrete example with an undefined likelihood is discussed by \citet[p.43]{LavineHodges12icar}
and \citet[p.102]{LindqvistTaraldsen18proper}.
% using a model with intrinsic conditional auto-regressions.
They consider a Gaussian density % for $x \in \RealN^n$ 
\be{GCAR}
f (x \st \theta) = c(\theta) \exp (- \theta \, \frac{x^T Q x}{2})
\ee
with a known $n\times n$ precision matrix $Q \ge 0$.
This is an improper density if $Q$ has at least one eigenvalue equal to zero,
and then the likelihood is undefined due to the ambiquity introduced by $c$.
The normalization constant $c$ is undefined.
A seemingly natural candidate,
motivated by the proper model case $Q > 0$,
is given by $c(\theta) = \theta^{n/2}$,
and this was used initially in the computer software WinBUGS
\citep[p.102]{LindqvistTaraldsen18proper}.
This choice in WinBUGS was later changed into
$c(\theta) = \theta^{(n-1)/2}$.
It is clear that, in this situation,
a prior information in the form of a prior density $f(\theta)$ can not
be combined with the given improper model to give a well defined
posterior density $f (\theta \st x)$.

A possible solution is given by restricing $x$ to the orthogonal complement
of the null space of $Q$.
The model density is then proper, and $c (\theta) = \theta^{(n-k)/2}$ is the
correct normalization when $k$ is the dimension of the null space of $Q$.
In the $k=1$ case considered by \citet[p.103]{LindqvistTaraldsen18proper} this
corresponds to a change from a uniform to a point mass distribution at $0$
for $(x_1 + \cdots + x_n)/n$.
More generally, the model in equation~(\ref{eqGCAR}) can be further specified as
a Gaussian distribution for $x$ with point masses at $k$ components. 
Anyhow,
a well defined posterior requires that the joint density $f(x,\theta)$,
or more generally as just exemplified,
a well defined joint distribution of the data $X$ and the model $\Theta$.
\citet[p.103]{LindqvistTaraldsen18proper} obtain
a unique normalized posterior $\pr_\Theta^x$
only in the case where the data $X$ is $\sigma$-finite.
Theorem~\ref{theo1} ensures, however, that a unique
posterior \Renyi state is defined also without requiring a $\sigma$-finite $X$.

A more transparent example is given by letting
$\pr_{X,\Theta} (dx,d\theta) = dx d\theta$ correspond to
Lebesgue measure in the plane.
The law of $X$ given $\Theta = \theta$ and the
posterior law of $\Theta$ given $X=x$ correspond then both
to Lebesgue measure on the line.
The factorization $f(x,\theta) = 1 = c(\theta) \pi (\theta)$
with $\pi(\theta) = 1/c(\theta)$ is completely arbitrary.
This can be interpreted according to \citet[p.26]{HARTIGAN}
as saying that the marginal law is not determined
by the joint law.
The choice of a pseudo-law $Q_\Theta$ plays a role similar
to the role of choosing a marginal law in the theory of Hartigan.
The interpretation of Hartigan is discussed in more detail by
\citet{TaraldsenLindqvist10ImproperPriors},
but it differs from the interpretation here.
We insist that the marginal law of $\Theta$ is uniquely determined from
the joint law of $X$ and $\Theta$.
In the case here it is given by the % $s$-finite
measure $\pr_\Theta (d\theta) = \infty \cdot d\theta$
which is not $\sigma$-finite.
It follows in particular that the decomposition
$\pr_{X,\Theta} (dx,d\theta) = \pr_X^\theta (dx) \pr_\Theta (d\theta)$
fails in this case.
However,
regardless of the choice of a pseudo-law $Q_\Theta$,
the decomposition
$\pr_{X,\Theta} (dx,d\theta) = \pr_X^\theta (dx) Q_\Theta (d\theta)$
defines $\pr_X^\theta (dx) = dx$ uniquely as a conditional \Renyi state.
% as pointed out more generally at the end of
%section~\ref{sTheory}.

\subsection{Elementary conditional \Renyi states}
Let $\pr_\Theta (d\theta) = d\theta_1 d\theta_2$ be Lebesgue measure in the plane,
and consider the indicator function of the upper half plane:
$\Gamma = \psi (\Theta) = (\Theta_2 > 0)$.
It follows that $\pr_\Gamma = \infty \delta_0 + \infty \delta_1$
so $\Gamma$ is not $\sigma$-finite.
The conditional law
$\pr_\Theta^\gamma (d\theta) = [(\gamma=1)(\theta_2 > 0) + (\gamma=0)(\theta_2 \le 0)] d\theta_1 d\theta_2$ is, however,
a well defined unique conditional \Renyi state.
It corresponds to the dominating measure $Q_\Gamma = \delta_0 + \delta_1$.
The conditional law $\pr_\Theta^1$ is Lebesgue measure restricted to the
upper half-plane and $\pr_\Theta^0$ is Lebesgue measure restricted to
the lower half plane.
This demonstrates directly that
the conditional law is also defined when $\Gamma$ is not $\sigma$-finite.

Consider more generally
a random natural number $T: \Omega \into \NatN$.
A dominating measure for $T$ is the counting measure
$Q_T$ on $\NatN$.
This gives
$\pr (A \st T=t) = \pr^t (A) = \pr (A (T=t))$.
Let $B = (T=1)$.
The previous gives then 
the elementary definition of the law
\be{ElementaryCond}
\pr (A \st B) = \pr (A B), \;\;\; \pr (B) > 0
\ee
The conditional $\pr (A \st B)$
is not defined from this argument when $\pr (B) = 0$
since $\pr^1 (A)$ can be arbitrarily specified in this case.
The previous
is consistent with the familiar
$\pr (A \st B) = \pr(A B)/\pr(B)$
for the case where $0 < \pr (B) < \infty$.
A \Renyi state is arbitrary up to multiplication by a positive constant.
It is an equivalence class of $\sigma$-finite measures.
Theorem~\ref{theo1} gives the existence of conditional expectations
in full generality - including this elementary case.
The restriction $0 < \pr (B) < \infty$ for defining 
$\pr (A \st B)$ has here been relaxed to the condition
$\pr (B) > 0$ by Theorem~\ref{theo1}.

\subsection{The marginalization paradox}

\citet[p.370]{StoneDawid72} consider inference for the ratio $\theta$ of two exponential means.
They assume that $X$ and $Y$ are independent exponentially distributed with
hazard rates $\theta \phi$ and $\phi$ respectively,
so $Z = Y/X$ will have a distribution that only depends on $\theta$.
In fact, $Z = \theta F$, where $F$ has a Fisher distribution with $2$ and $2$
degrees of freedom since a standard exponential variable is distributed
like a $\chi^2_2 /2$ variable.
\citet{StoneDawid72} conclude that the density is
\be{mm1}
f (z \st \theta) = \theta^{-1} (1 + z/\theta)^{-2} = \theta (\theta + z)^{-2}
\ee
and that the posterior density corresponding to a prior density $\pi (\theta)$ is
\be{mm2}
\pi (\theta \st z) \propto \frac{\theta \pi (\theta)}{(\theta + z)^{2}}
\ee
A second argument considers a
joint density for $(x,y,\theta,\phi)$ from a joint prior $\pi (\theta) d\theta d\phi$.
This gives
%
%\be{mm3}
$
\pi (\theta, \phi \st x, y) \propto
\pi (\theta) \theta \phi^2 \exp(-\phi (\theta x + y))
$,
% \ee
% 
and the posterior density of $\theta$ follows by integration over $\phi$ to be
\be{mm4}
\pi (\theta \st x, y)
\propto \frac{\theta \pi (\theta)}{(\theta x + y)^{3}}
\propto \frac{\theta \pi (\theta)}{(\theta + z)^{3}}
\ee
Equation~(\ref{eqmm4}) gives a posterior given the data $(x,y)$
that differs from the posterior found in equation~(\ref{eqmm2}).
This constitutes the argument and paradox presented originally by
\citet{StoneDawid72}.

\vekk{ % Lenger bak!
A range of similar paradoxes were presented later by
\citet{DawidStoneZidek73}. % with discussion of links to fiducial inference.
% They claim that the \citet{FRASER} theory of structural inference
% is intrinsically paradoxical under marginalization.
Furthermore, Lindley,
in his discussion of the paper
\citep[p.218]{DawidStoneZidek73} writes:
\begin{quote}
{\it The paradoxes displayed
here are too serious to be ignored and impropriety must go.
Let me personally retract
the ideas contained in my own book.
}
\end{quote}  
This is of particular relevance here since in 1964,
in his book,
\citet[p.xi]{LINDLEY} wrote:
\begin{quote}
{\it
  The axiomatic structure used here  is  not the usual one
  associated with the name of Kolmogorov.
  Instead one based on the ideas  of  Rényi has been used.
}
\end{quote}  
We argue here and in the following that Lindley's initial intuition
was correct:
The theory of Rényi gives a mathematical foundation
for statistics that allows unbounded measures.
} % EOF lenger bak

%\subsection{Resolving the paradox}

% We disagree with the criticism of Fraser's structural inference,
% but more importantly
We will next reconsider the above example
in view of the theory presented in the previous Section.
This has already been indicated by \citet{TaraldsenLindqvist10ImproperPriors},
and is discussed in more detail by \citet{LindqvistTaraldsen18proper}.
\citet{LindqvistTaraldsen18proper} rely on a theory where
it is only allowed to condition on $\sigma$-finite statistics.
We extend this argument now with reference to Theorem~\ref{theo1}
which allows conditioning on any statistic.

The initial assumptions are equivalent with a
joint distribution given by the density:
\be{mm5}
f (x,z,\theta,\phi) = \pi (\theta) f(x,z \st \theta, \phi)
= \pi(\theta) \theta \phi^2 x e^{-\phi x (\theta + z)} 
\ee
% 
%\citet{TaraldsenLindqvist10ImproperPriors} explain that any marginal is
%determined by a joint density.
%In the present case,
Integration over $\phi$ gives
\be{mm6}
f (x,z,\theta) = \pi(\theta) \theta x^{-2} (\theta + z)^{-3}
\ee
which implies
\be{mm7}
\pi (\theta \st x,z) =
\pi(\theta) \theta x^{-2} (\theta + z)^{-3} =
\pi(\theta) \theta (\theta + z)^{-3}
\ee
The second equality holds since it is equality in the sense
given by an equivalence class as in Theorem~\ref{theo1}.
The right hand side can be multiplied by an arbitrary positive function $c(x,z)$
without changing the equality sign.
Equation~(\ref{eqmm7}) is equivalent with equation~(\ref{eqmm4}) since
there is a one-one correspondence between
$(x,y)$ and $(x,z)$.

An alternative is to integrate equation~(\ref{eqmm5}) over $x$ to obtain
\be{mm8}
f (z,\theta, \phi) = \pi(\theta) \theta (\theta + z)^{-2}
\ee
which implies
\be{mm9}
\pi (\theta \st z, \phi) = \pi(\theta) \theta (\theta + z)^{-2}
\ee
This is similar to equation~(\ref{eqmm2}),
but the conditioning differs.

Reconsider now the argument leading to equation~(\ref{eqmm2}).
The first observation was that $Z = Y/X$ has a distribution
that only depends on $\theta$.
This is true, but it is still conditionally given both $\theta$ and $\phi$
as assumed initially in the model.
Equation~(\ref{eqmm2}) and equation~(\ref{eqmm1}) are wrong as stated,
interpreted as conditional \Renyi states,
but can be corrected by a replacement of
$f (z \st \theta)$ by $f (z \st \theta, \phi)$ and
$\pi (\theta \st z)$ by $\pi (\theta \st z, \phi)$.
The error in the original argument,
as interpreted in the theory presented here,
is that it can not be concluded that
$\pi (\theta \st z) = \pi(\theta \st z, \phi)$
even though the later does not depend on $\phi$.
Equation~(\ref{eqmm9})
is not in conflict with
equation~(\ref{eqmm7}) for the same reason.

\vekk{ % gir bare 'forviklinger' - selv om korrekt
Starting with either equation~(\ref{eqmm6}) or
equation~(\ref{eqmm8}) gives
\be{mm10}
f (z,\theta) = \infty \cdot \pi(\theta)
\ee
which shows that neither
$Z \st \Theta=\theta$
nor
$\Theta \st Z=z$
can be represented by a $\sigma$-finite measure.
This implies that the argument in equations~(\ref{eqmm1}-\ref{eqmm2})
is wrong given equation~(\ref{eqmm5}).
Equation~(\ref{eqmm4}), or equivalently
equation~(\ref{eqmm7}), gives the correct posterior distribution for $\Theta$.

If, instead, the prior $\pi(\theta) \phi^{-1} d\theta d\phi$ is used,
then the result will be
\be{mm11}
f (\theta \st z, \phi) = \phi^{-1} \pi(\theta) \theta (\theta + z)^{-2}
= \pi(\theta) \theta (\theta + z)^{-2}
\ee
and
\be{mm12}
f (\theta \st x,z) = \pi(\theta) \theta x (x (\theta + z))^{-2}
= \pi(\theta) \theta (\theta + z)^{-2}
\ee
The conditionals coincide,
but it is still true that neither equals the law of $\Theta \st Z=z$
since the law of $(\Theta, Z)$ still fails to be $\sigma$-finite.

If, however, equation~(\ref{eqmm1}) together with a prior
$\pi (\theta)$ is taken as the initial $\sigma$-finite law for $(\Theta,Z)$,
then equation~(\ref{eqmm2}) is the correct posterior.
There is then no paradox since the conflicting conclusions
are consequences of different initial assumptions.
} % EOF forviklinger

More generally,
it can be noted that
even if a conditional law $\pr^{x,z}$ does not depend on $x$
it can not be concluded that it equals $\pr^z$.
This is demonstrated by equation~(\ref{eqmm7}) and
equation~(\ref{eqmm9}).
The rule $\pr^{x,z} = \pr^z$ holds for probability distributions,
and also more generally if $Z$ and $(X,Z)$ are $\sigma$-finite
given that $\pr^{x,z}$ does not depend on $x$.
\citet{StoneDawid72} calculated formally % as explained above
as if the rule where generally valid.
This resulted in two conflicting results.
This example, and the other examples constructed by \citet{StoneDawid72} are most important
since they illustrate important differences between the
theories of Kolmogorov and \Renyi.
\citet{StoneDawid72} pointed out that purely formal manipulations with improper distributions,
treated as if they obeyed all the rules of proper distributions,
could lead to paradoxical inconsistencies
— which by reductio ad absurdum —
is an argument against doing such formal computations.

\subsection{The Jeffreys-Lindley paradox}

Observations can give rejection of a simple
hypothesis at the $5\%$ level,
but a Bayesian analysis can give the hypothesis a posterior probability
larger than $95\%$.
\citet{Lindley57paradox} discussed this seemingly paradoxical phenomena 
with reference to previous work by
% \citet[Appendix in the 1948 edition]{JEFFREYS}.
\citet{JEFFREYS}.
Both \citet[p.148-156]{BERGER} and \citet[p.230-236]{ROBERT} give
thorough discussions of the problem of testing a point null hypothesis,
and explain that the use of improper priors is a delicate issue in this case.
This has also been emphasized in several discussion papers
\citep{Shafer82paradoxTesting,BergerSellke87pointNull,BergerDelampady87pointNull,
RobertChopinRousseau09jeffreys,Robert14jlParadox}.
A full discussion of this problem in the context of the theory of \Renyi will not be given here,
but we will indicate some consequences and observations.

The most important is to note that any prior,
improper or not, contains information.
We agree with \citet[p.29]{ROBERT} that
it is a mistake to think of improper priors as representing ignorance. 
This is particularly important when testing a point null hypothesis,
which in most situations implies a non-symmetric treatment of the
hypothesis and the alternative hypothesis.
The relevance of the information is specific to each particular case with its
own interpretation.
\Renyi explained that improper laws
can be interpreted in terms of the associated family of conditional probabilities.
This holds for both prior and posterior laws,
and also so in a hypothesis testing problem.

Assume that $x \sim \normvar(\theta,\sigma^2)$ with
unknown mean $\theta \in \Omega_\Theta = \RealN$ and known variance $\sigma^2$
so
$f(x \st \theta) = [\sqrt{2\pi} \sigma]^{-1} \exp({-\half (x - \theta)^2/\sigma^2})$.
Consider the hypothesis $H_0 = \{0\} \subset \Omega_\Theta$ versus
the alternative $H_1 = H_0^c = \{\theta \in \Omega_\Theta \st \theta \not\in H_0\}$.
This basic hypothesis testing problem is often the first example
of hypothesis testing presented to statistics students using
the notation $H_0: \theta = 0$ versus $H_1: \theta \neq 0$.
Our notation identifies the hypothesis and its alternative
more explicitly with a partition $H_0 + H_1$ of the
model parameter space $\Omega_\Theta$.
%If $x$ is the average of many independent observations, then $\sigma$ is small and 

The uniformly most powerful unbiased level $\alpha$ test rejects $H_0$ if
\citep[p.374]{CasellaBerger90inference}
\be{pValue}
t = \phi (x) % = \pr (\abs{X} > \abs{x/\sigma} \st \Theta = 0)
=
2 \Phi (-\abs{x/\sigma}) \le \alpha
\ee
where $\Phi (z) = \pr (Z \le z) $ with $Z \sim \normvar (0,1)$.
The test statistic $T = \phi (X)$ is the p-value.
It is a probability,
but it must not be confused with the posterior probability of $H_0$ given the data.
The posterior probability is undetermined in this classical analysis. 

Consider next a Bayesian analysis with
a prior density $\pi$ with respect to the measure
$\nu (d\theta) = \delta_0 (d\theta) + d\theta$.
The Dirac measure $\delta_0$ is dimensionless,
and it is hence assumed that $\theta$, $x$, and $\sigma$ are dimensionless in the following.
A Bayesian test with minimal posterior risk rejects $H_0$ if
the posterior probability of $H_0$ given the data is small
\citep[p.164]{BERGER}
\be{BReject}
s = \phi_\pi (x) =
\pi (0 \st x) =
%\pr (\Theta \in H_0 \st X = x)
\left[1 +
  \int
  \frac{f(x \st \theta) \pi (\theta) d\theta }{f(x \st 0) \pi (0)} \right]^{-1}
\le
\frac{\LossII}{\LossII + \LossI}
\ee
$\LossI$ is the loss corresponding to a type I error,
$\LossII$ is the loss corresponding to a type II error,
and the loss is zero otherwise.
The classical and Bayesian tests are similar in form,
but the Bayesian test statistic $S = \phi_\pi (X)$ depends on the prior
density $\pi$.
We have here restricted attention to the case where the posterior is proper,
and the above integral is then finite. 

Consider first the constant prior $\pi_\infty (\theta) = c$.
This gives
\be{p1}
\phi_\infty (x) 
= [1 + f(x \st 0)^{-1}]^{-1}
= [1 + \sqrt{2 \pi} \sigma \exp \left(\frac{x^2}{2 \sigma^2}\right)]^{-1} 
\ee
Figure~\ref{fig1}
\begin{figure}
    \centering
    \includegraphics[width=.8\textwidth]{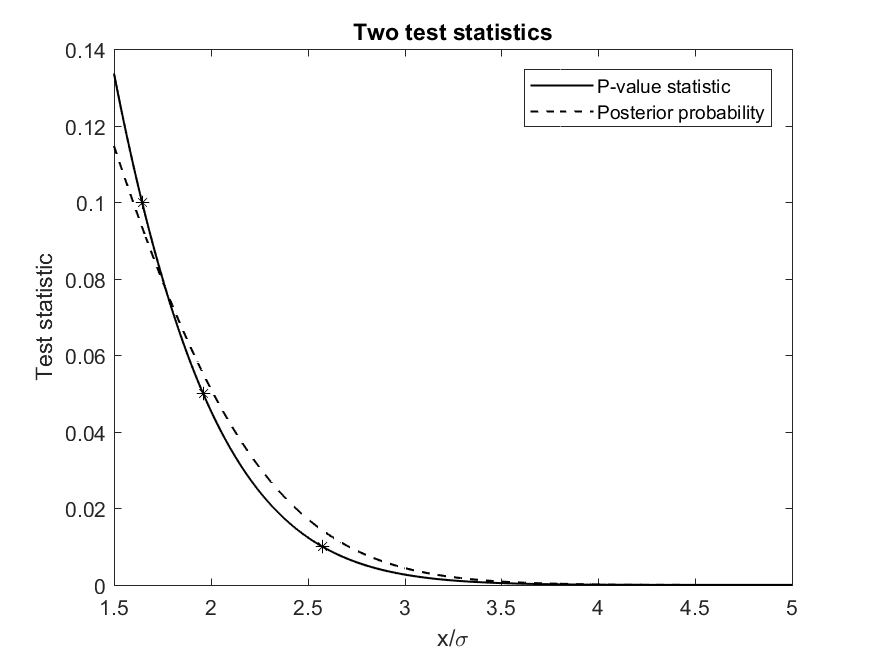}
    \caption{Two test statistics for testing $H_0: \theta = 0$ versus $H_1:\theta \neq 0$
      based on $x \sim \normvar(\theta, \sigma^2)$.} 
    \label{fig1}
\end{figure}
shows the remarkable similarity between the
p-value and the posterior probability for the case $\sigma = 1$.
\citet[p.234]{ROBERT} also notes this similarity in his Table~5.2.5,
and discusses this phenomena.
This similarity,
and more generally the close resemblance in practice between Bayesian and classical methods
for many common statistical problems,
is a common theme in the early fundamental texts on Bayesian statistics
\citep{JEFFREYS,LINDLEY,SAVAGE}.

A common way to justify usage of improper priors is to
consider limits of proper priors.
Consider the sequence $\pi_1, \pi_2, \ldots$ of proper prior densities defined by
$\pi_n (0) = \pi (0)$ and
$\pi_n (\theta) = (1 - \pi (0)) g_n (\theta)$ for $\theta \neq 0$.
Let $g_n (0) = 0$ and
$g_n (\theta) = [\sqrt{2\pi} \tau_n]^{-1} \exp(-\half \theta ^2/\tau_n^2)$
for $\theta \neq 0$.
Define also $g_\infty (0) = 0$ and $g_\infty (\theta) = 1$ for $\theta \neq 0$.
If the variance $\tau_n^2 \rightarrow \infty$,
then $g_n \rightarrow g_\infty$ intuitively as densities since
a normally distributed variable with infinite variance
should correspond to a variable with a constant density.
This convergence is in fact true if interpreted as densities with respect to $d\theta$ 
in the sense of $q$-vague convergence \citep{BiocheDruilhet16convergence}. 
It seems hence reasonable to take the sequence
$\pi_1, \pi_2, \ldots$ of proper densities as an approximation
of the improper density $\pi_\infty (\theta) = c = \pi (0)$.
The point mass $\pi (0)$ at $\theta=0$ is then fixed,
and the densities for $\theta \neq 0$ approximate the flat density.
Equation~(\ref{eqBReject}) gives, however, that
$\pi_n (0 \st x) \rightarrow 1$,
since  $\int f(x \st \theta) \pi_n (\theta) \,d\theta \rightarrow 0$.
This is in conflict with $\pi_\infty (0 \st x) \le [1 + \sqrt{2 \pi} \sigma]^{-1}$.
The source of the problem is that the generalized density
$\pi_n = \pi (0) \delta_0 + (1 - \pi (0)) g_n$ with respect to $d\theta$ does not converge to the density
$\pi_\infty = \pi (0) \delta_0 + (1 - \pi (0)) g_\infty$ as intuition would suggest,
but instead $\pi_n \rightarrow \delta_0$ as explained by
\citet{BiocheDruilhet16convergence}.

The previous can be used to illustrate the Jeffreys-Lindley paradox.
Assume that $x$ is statistically significantly different from $0$ at the $\alpha$ level
of significance in the sense of $\phi (x) \le \alpha$ as defined by equation~(\ref{eqpValue}).
The convergence $\pi_n (0 \st x) \rightarrow 1$ ensures, however,
that the posterior probability $\pi_N (0 \st x) > 1 - \alpha$ for a prior $\pi_N$ with a
sufficiently large $N$.
This can, of course, only be considered to be paradoxical in a situation where
the prior $\pi_N$ is reasonable.
Consider instead a symmetric proper prior on the form
$\pi (0) = 1/2$ and $\pi(\theta) = g(\theta)/2$ for $\theta \neq 0$ where
$g(\abs{\theta})$ is non-increasing in $\abs{\theta}$.
The critical values $1.645$, $1.960$ and $2.576$ for $x/\sigma$
corresponds to the p-values $10\%$, $5\%$ and $1\%$ as also indicated
in Figure~\ref{fig1}.
The corresponding posterior values $\pi (0 \st x)$ are, however,
bounded from below by $39\%$, $29\%$, and $10\%$ for {\em any}
prior on the given form \citep[p.154, Table 4.4]{BERGER}.
The conclusion is that a large class of reasonable symmetric proper priors
gives a posterior probability much larger than the classical $p$-value.
This is an even more striking illustration of the
Jeffreys-Lindley paradox.

Consider again the improper prior density $\pi_\infty (\theta) = c$
with respect to $\nu (d\theta) = \delta_0 (d\theta) + d\theta$.
The value of the constant $c$ is of no concern, as we know from the general theory,
and also explicitly from equation~(\ref{eqp1}).
Equation~(\ref{eqp1}) has, however, a dependency on $\sigma$ that is a concern.
It follows that the prior $\pi_\infty$ corresponds effectively to two
different priors if a concrete problem is formulated first in terms of one measurement scale and then
alternatively in terms of a different measurement scale.
The p-value does not share this defect, since it only depends
on the scaled variable $x^* = x/\sigma$ as given in
equation~(\ref{eqpValue}).
An alternative prior is obtained
by reformulating the original problem in scaled variables and
using the prior $\pi_\infty$ for this problem.
Transforming back gives the
improper density $\pi^* (0) = \sigma$ and
$\pi^* (\theta) = 1$ for $\theta \neq 0$.
The result is a posterior probability $\pi^* (0 \st x)$ that only depends on $x^* = x/\sigma$,
and it is remarkably close to the p-value as shown in Figure~\ref{fig1}
for all values of $\sigma$.

It was seen above that a seemingly reasonable approximation by proper priors failed.
An alternative sequence of proper priors is obtained by using the
interpretation \Renyi gives for the prior corresponding to the density $\pi^*$.
Let $B_n = [-n \sigma, n \sigma]$ for $n = 1, 2, \ldots$.
The conditional probability $\pr_\Theta (A \st B_n)$
is then given by the proper density $\pi^*_n$ defined by
$\pi^*_n (0) = 1/(1 + 2 n)$ and
$\pi^*_n (\theta) = [-n \le \theta/\sigma \le n]/(\sigma + 2 n \sigma)$ for $\theta \neq 0$.
In this case $\pi_n^* (0 \st x) \rightarrow \pi^* (0 \st x)$,
and this gives in particular a proper prior with a posterior that
approximates the p-value as shown in Figure~\ref{fig1}.
The appropriateness of a prior on this form can not be decided in general,
but must be decided in each concrete case.

Consider finally a concrete problem where it is assumed that the prior density $\pi^*$
gives a reasonable prior for $\theta$.
Assume that a measurement is done and $x \sim \normvar (\theta, \sigma^2)$ is observed.
In this case, the classical and Bayesian procedures are very similar if 
$\alpha = \LossII/(\LossI + \LossII) = 5\%$.
Assume next that the experimenter chooses
to repeat the measurement $N-1$ more times.
The prior information is, of course, not changed by this decision,
so the prior is still given by $\pi^*$.
A sufficient statistic is given by the empirical mean
$\overline{x} \sim \normvar(\theta, \sigma^2/N)$.
The classical p-value and the posterior probability $\pi^* (0 | \overline{x})$
are in this case very different if $N$ s large.
It follows in particular that
$\pi^* (0 | \overline{x}) \rightarrow 1$ as
$N \rightarrow \infty$ for all fixed $\overline{x}$,
and the Jeffreys-Lindley paradox reappears.
We see this, in fact, as no paradox, but as a most important and
striking demonstration of an important
difference between Bayesian and classical inference.

\subsection{Hypothesis testing with improper posteriors}

The possibility of improper posteriors was not considered in the previous discussion of
the Jeffreys-Lindley paradox.
It was, in fact, demonstrated that there exist a proper prior so that
the classical and the Bayesian decision rules essentially coincides
as shown in Figure~\ref{fig1}.
This proper prior appears naturally from the \Renyi interpretation of
a corresponding improper prior in terms of a family of conditional probabilities.
It was also noted that this improper prior can be approximated
arbitrary well by a sequence of proper priors in a natural topology for \Renyi states
given by q-vague convergence \citep{BiocheDruilhet16convergence}.

Another observation is that, in general,
a classical matching prior is typically improper.
\citet{DeGroot73testing} demonstrates, by an elegant argument,
how a matching prior can be determined for a different problem.
A classical matching prior is here defined to be a prior such that the
posterior coincides with the p-value.
The prior $\pi^*$ is only approximately matching as shown in Figure~\ref{fig1}.
A matching prior - if it exists - is determined by the integral equation
that follows by equating $\alpha$ and $\LossII/(\LossI + \LossII)$
in equation~(\ref{eqpValue}) and equation~(\ref{eqBReject}).
We will not discuss this further here,
but observe that a solution is given explicitly by an inverse Fourier transformation.  

The butterfly, Poisson process, and Bernoulli examples in the Introduction
can be used to exemplify a hypothesis testing problem with an improper posterior.
Consider, instead,
testing of $H_0: \gamma \le 0$ versus $H_1: \gamma > 0$
based on observing $x \sim \normvar (\gamma, \sigma^2)$ with
unknown $\theta = (\gamma, \sigma) \in \Omega_\Theta = \RealN \times \RealN_+$.
This problem,
but with known variance $\sigma^2$,
is considered by \citet[p.147-148]{BERGER}.
He notes that a constant prior corresponds to an infinite mass
to both hypothesis,
but argues that this can be tackled by consideration
of increasingly larger intervals.
The essence of the following argument is that
this argument should in principle then be equally possible for the posterior.
This is given by the general interpretation
of any \Renyi state by its corresponding family of conditional probabilities.

Assume that the prior density is $\pi (\theta) = 1/\sigma$
with respect to $\nu (d\theta) = d\gamma d\sigma$.
The posterior is then improper,
and given by the density
$\pi (\theta \st x) = \sigma^{-2} \exp({-\half (\theta - x)^2/\sigma^2})$.
It follows that
$\pr (\Theta \in H_0 \st X=x) = \pr (\Theta \in H_1 \st X=x) = \infty$,
and it is not obvious how to formulate a decision rule.
The interpretation of \Renyi leads to consideration of the posterior probability
$\pr^x_\Theta (H_0 \st B)$ for all $B$ with $0 < \pr (\Theta \in B) < \infty$.
In an application it can, possibly, be argued that
it is sufficient to consider elementary events on the form
$B (m,n) = (-m,m) \times (1/n, n)$.
This determines corresponding posterior
probabilities $\phi_{m,n} (x)$ for $H_0$ which
should be considered when deciding to reject $H_0$ or not.
We leave the further discussion of this for the future. 

Another possible approach is presented next. 
The posterior gives the two improper marginal densities
\be{pSigma}
\pi (\sigma \st x) = \sigma^{-1}
\ee
\vspace{-4ex}
\be{pGamma}
\pi (\gamma \st x) = \abs{\gamma - x}^{-1}
\ee
The posterior for $\sigma$ has no dependence on $x$ as
intuition would suggest without any further argument.
The posterior for $\gamma$ is symmetric around $x$
- again in harmony with intuition.
The posterior for $\gamma$ is improper,
but it clearly represents an 
updating of the state of knowledge regarding $\gamma$.
As a technical aside it can be observed that the posterior for $\gamma$
is in fact not $\sigma$-finite,
but the posterior for $\sigma$ is $\sigma$-finite.
It is only the full posterior that is guarantied to be
represented by a conditional \Renyi state
by Theorem~\ref{theo1}. 

For the given hypothesis testing problem it can seem natural to replace
the model space $\Omega_\Theta$ with the focus space $\Omega_\Gamma = \RealN$. 
The hypothesis are then represented by
$H^*_0 = \{\gamma \st \gamma \le 0\}$ and
$H^*_1 = \{\gamma \st \gamma > 0\}$.
Motivated by the interpretation from \Renyi
it seems natural to consider % the bunch given by
the elementary events
$B (m,n) = (-m,m) \setminus (x-1/n, x+1/n)$
with $m > \abs{x}$.
The singularity of the posterior at $\gamma = x$ can then be
tackled by taking the limit $n \rightarrow \infty$
for the resulting posterior probabilities $\phi_{m,n} (x)$ for $H_0$.
This leads to $\phi_m (x) = \phi_{m,\infty} (x) = (x \le 0)$.
The conclusion from this argument is
to reject $H_0: \gamma \le 0$ if $x > 0$.

\vekk{
\subsection{Toy example}
Improper target density 
\begin{equation}
f(x) = \frac1{\sqrt{2\pi}}e^{-x^2/2}+\frac{f_\infty}{1+e^{-x}}
\end{equation}
This has an improper cdf
\begin{equation}
F(x) = \phi(x) + f_\infty \ln(e^x+1).
\end{equation}
Letting $a_i$, $i=0,2,\dots,m$ denote the boundaries between the intervals 
where $a_0=-\infty$, the theoretically optimal kernel weights for each interval are then
\begin{equation}
w_i = F(a_i)-F(a_{i-1})
\end{equation}

\subsection{Butterflies}
\citep{Tufto2012butterflies}
\subsection{Random effects}

\subsection{Removal sampling}
\citep{Druilhet2016}
}

% \section{Discussion}
\section{Final remarks}
\label{sDiscussion}

\citet[p.xi]{LINDLEY} wrote in 1964 in the preface of
his classic book on Bayesian statistics:
\begin{quote}
{\it\bf
  The axiomatic structure used here  is  not the usual one
  associated with the name of Kolmogorov.
  Instead one based on the ideas  of  Rényi has been used.
}
\end{quote}  
It can be concluded that Lindley
initially supported the use of conditional probability spaces
as introduced by Rényi.
We have argued that Lindley's initial intuition is correct.
The theory of \Renyi gives a natural approach
to Bayesian statistics including commonly used objective priors.

The marginalization paradoxes seem to have been the main reason
for Lindley's change in opinion on this.
Tony O'Hagan \href{https://www.youtube.com/watch?v=cgclGi8yEu4}{interviewed}
Lindley for the Royal Statistical Society's Bayes 250
Conference held in June 2013.
Lindley explains very nicely that all probabilities are conditional probabilities,
but also recalls his reaction to the marginalization paradoxes presented
by \citet{StoneDawid72}:
%
%\begin{quote}
{\it Good heavens, the world is collapsing about me.} 
%\end{quote}
%
In the interview,
Lindley continuous to argue that Bayesian statistics
% without improper priors
is a sound theory, 
and that the focus should be on
how to quantify the prior uncertainty of the
unknown parameters.
The parameters should be viewed as real physical quantities 
regardless of which experiment is later used for decreasing their uncertainty.
This clearly disqualifies the choice of data dependent priors,
and even the choice of priors depending on the particular statistical model used.
We wholeheartedly agree with Lindley on this,
but we claim that this can be done also within the more general
theory introduced by \Renyi and continued here.

\vekk{% kutter og kutter

% \footnote{
As mentioned in Section~\ref{sMathStat} we consider choice
of notation to be important, and we next discuss our choices briefly.
The space $\Omega$
is abstract in the sense that it is assumed to exist,
and the definition of other concepts such as expectation value and random variables
relies on its existence.
%  but it is very seldom % never
%  specified in concrete problems since this is not needed. 
  \citet[p.607]{SCHERVISH} use the symbol ${\cal S}$
  for the underlying probability space,
  and this is also quite common in more
  elementary texts on statistics \citep[p.6]{CasellaBerger90inference}.
  \citet[p.29]{LehmannRomano05testing} use the symbol ${\cal Z}$,
  \citet[p.187]{HALMOS} use $X$,
  and \citet[p.2]{KOLMOGOROV} used the symbol $E$.
  We use $\Omega$ which is the traditional choice in probability theory
  \citep{DOOB,RENYI,BILLINGSLEY,LOEVE,BREIMAN,Kallenberg97probability}.
  This choice is also used in some reference texts on statistics:
  \citep[p.81]{RAO} and \citet[p.287]{StuartOrd96kendallsvolI}
  use the symbol $\Omega$ for the set of elementary events.
  % }

The notation $(Z \in A) = Z^{-1} (A) = \{\omega \st Z (\omega) \in A\}$
is as used by \citet[p.20]{BREIMAN}.
A similar convention,
with obvious generalisations for cases with more conditions,
is used by \citet[p.~4]{DOOB}.
Hence, we prefer $(X \in A, Y \in B)$ in favour of the
alternatives $(X, Y)^{-1} (A, B)$ and $(X^{-1} (A)) \cap (Y^{-1}(B))$. 
Additionally, we can then define
$\pr_X (A) = \pr (X \in A)$, which is familiar in probability theory.
\citet{SCHERVISH} uses $\mu_X$ similarly when the underlying law is $\mu$. 

The notation $\pr^t (A) = \pr (A \st T = t)$ is convenient,
and $\pr_X^\theta (A) = \pr (X \in A \st \Theta = \theta)$ is then a consequence.
Kolmogorov used the same convenience, but with reversed roles for
super- and sub-scripts.
Our subscript choice is motivated by the widespread usage
of $f_X$, $F_X$, $M_X$ etc for the density, cumulative density,
moment generating function and so on for quantities associated with $X$.
The notation $\Omega_X$ and $\Omega_\Theta$ for the
data space and model space respectively are then also natural choices.

Usually, the conditional probability $\pr_X^\theta$ is specified such that
it is a probability distribution on the data space for each $\theta$.
Often this gives a one-one correspondence between $\Omega_\Theta$
and a family of probability distributions on the data space.
In this case it can be assumed directly that the model space
$\Omega_\Theta$ is a set of possible probability distributions on the data space.
There are, however, important exceptions to this case:
Over-parametrised models in linear regression, generalised linear models,
neural networks, and many other related complex  models.

It is quite common to let $P_\theta$ denote the probability law on the sample space
\citep[p.3]{LehmannRomano05testing}\citep[p.83]{SCHERVISH}.
We choose instead to use the notation $\pr_X^\theta$ for this as explained.
\citet[p.83-84]{SCHERVISH} use the notation $\mu_\Theta$ for the prior,
$\mu_X$ for the marginal, and $\mu_{\Theta \st X} (A \st x)$ for the posterior.
%  His notation will then give $\mu_{X \st \Theta}$
Our notation for the later is $\pr_\Theta (A \st X = x)$,
and more generally we prefer to keep the single symbol $\pr$.

The model space can be finite dimensional:
One interesting example is that it equals the unit circle $S^1$.
It can be infinite dimensional as in nonparametric statistics or
as in problems in econometrics with stochastic processes.
The model space comes typically with additional mathematical structure
(differential manifold, group actions, $\ldots$) specific to the
domain of interest.
The same comment holds for the data space, the action space, and the focus space.
Both $\phi_1 (x) = \pr^x_\Theta$
and $\phi_2 (x) = \pr^x_\Gamma$ exemplify actions where the
action space is the set of laws on the model space and the focus space respectively.
In an estimation problem it is natural to choose
the action space equal to the focus space,
but there are many  other possibilities depending on the kind of inference:
hypothesis test,
interval estimate (credibility, confidence, prediction, tolerance, $\ldots$),
distribution estimate (credibility, confidence, $\ldots$),
and many more.

} % kutter

\vekk{ Kutter mer
The concept of 
a conditional probability space invented by Rényi is not commonly used in statistics today.
The example with the uniform law $\pr_\Theta$ on the real line provides
the family $\cB_0 = \{[-n,n) \st n \in \NatN\}$ of events 
which is a {\em bunch} $\cB$ and
the family 
$\{\pr_\Theta (\cdot \st B) \st B \in \cB\}$ 
of conditional probabilities
defining together a conditional probability space:
A measurable space equipped with a consistent family of
conditional probabilities indexed by the sets $B$ in the bunch.
The structure theorem of \citet{RENYI} ensures that any conditional probability space
is generated by a corresponding $\sigma$-finite measure.
We have hence abused notation slightly by using the symbol
$\pr_\Theta$ for both a $\sigma$-finite generating measure
and the corresponding \Renyi state given as an equivalence class.
The latter corresponds to the maximal family of conditional probabilities
indexed by the bunch $\cB_\Theta = \{B \st 0 < \pr_\Theta (B) < \infty\}$.
The model space $\Omega_\Theta$ is a \Renyi space when equipped with
the \Renyi state $\pr_\Theta$.

The previous definitions of a \Renyi space
and a \Renyi state are introduced here,
and are modifications of similar definitions suggested by
\citep[p.5013]{TaraldsenLindqvist16renyi}.
It corresponds to the definition of a
full conditional probability space as introduced by
\citet[Def.2.2.3]{RENYI}.
The bunch for a given conditional probability space
is contained in the bunch defined as above from
a generating $\sigma$-finite measure.
Convergence of \Renyi states given by equivalence classes $[\mu]$ of
Radon measures is investigated by
\citet{BiocheDruilhet16convergence}.
Their results demonstrate also that
the distinction between a $\sigma$-finite measure
and the corresponding \Renyi state is essential.
}

Historically, the most influential initial work on Bayesian inference is possibly
given by the book by \citet{JEFFREYS}.
\citet[p.21]{JEFFREYS} argues in particular that
the normalization of probabilities is a rule generally adopted,
but that the value $\infty$ is needed in certain cases.
This is in line with current usage of Bayesian arguments
\citep{BERGER,ROBERT}.
It is well established that inference based on
the posterior gives, indeed,
a most rewarding path
for obtaining useful inference procedures
from both a frequentist and a Bayesian perspective
\citep{BERGER,SCHERVISH,LehmannRomano05testing}.
Parts of Jeffreys arguments were mainly intuitive, 
and there is
a lack of mathematical rigor as also observed by \citet{RobertChopinRousseau09jeffreys}.
We suggest that 
a rigorous reformulation of 
some of the original and most important 
ideas of \citet{JEFFREYS}
can be done within the mathematical theory presented here.

Within this framework we reach the view that improper posteriors,
just as improper priors, are not `improper' but reflect 
the updated state of knowledge 
about a parameter after conditioning on the data.  
Returning to the introductory Poisson-process
example, at time $t$, we have clearly learned something about $\lambda$ in that
our belief in large values of the Poisson intensity $\lambda$ has decreased while our
relative degree of belief in small values of $\lambda$ has remained approximately unchanged.
An improper posterior does not imply that our prior was wrong,
but only that more data perhaps needs to be collected if possible.
Proceeding by using the improper posterior at time $t$
as prior in subsequent inference, say based on the number of occurrences  observed
in a sufficiently long subsequent interval $(t,t_2]$, we indeed eventually reach
the same proper final posterior as the one reached by combining the initial scale
prior and the likelihood for the data on $(0,t_2]$.
We hope that the reader can appreciate that this argument 
indicates also the potential philosophical importance of unbounded laws more generally.

%\newpage

%\newpage

%\section{Final remarks}
%\label{sRemarks}

An unbounded law can, according to \Renyi, be interpreted by
the corresponding family of conditional probabilities given
by conditioning on the events in the bunch.
These elementary conditional probabilities are probabilities in the sense of Kolmogorov,
and the interpretation depends on the application.
They can, as
\citet{Lindley06Uncertainty}
%Lindley
advocates convincingly, be interpreted as personal probabilities
corresponding to a range of real life events.
They can also, as needed in for instance quantum physics,
be interpreted as objectively true probabilities
representing a law for how a system behaves when observed repeatedly under idealized conditions.

Assume now that you accept
a theory where the prior uncertainty is given by a possibly unbounded law.
It is then natural,
we claim, 
that you accept that a resulting posterior uncertainty can also be
represented by a possibly unbounded law.
Both the prior and the posterior represent uncertainty of the same kind.
Hopefully, many can agree on this on an intuitive level.
The main mathematical result presented here is Theorem~\ref{theo1} which provides a
key ingredient in a mathematical model for statistics
in which this can be done consistently without paradoxical results.
This key ingredient is a well defined extension of the
concept of conditional expectation as introduced originally by
\citet{KOLMOGOROV} to also include the case of \Renyi {\ } spaces.

%\newpage

\vekk{

\appendix

\section{Appendix on measure theory}
\label{sDef}

\subsection{Measurable space and measure}

A measurable space is a set $\setX$ equipped with
a $\sigma$-field $\cF$ of subsets of $\setX$.
A $\sigma$-field $\cF$ is a collection of
subsets of a fixed set that contains the empty set $\emptyset$
and is closed under complements and countable unions.
A set $A \subset \setX$ is measurable if $A \in \cF$.
A measure $\mu$ is a function
$\mu: \cF \into [0,\infty]$ with $\mu (\emptyset) = 0$
that is countably additive:
$\mu (A_1 + A_2 + \cdots) = \mu (A_1) + \mu (A_2) + \cdots$.
A probability measure is a measure $\mu$ on
a measurable space $\setX$ with $\mu (\setX) = 1$.
A measure space is a measurable space $\setX$ equiped with a
measure \citep[p.16]{RUDIN}. 
A probability space is a measurable space $\setX$ equipped with a
probability measure.

%\subsection{Sigma-finite quantities}

A sigma-field $\cF_0 \subset \cF$ of a measure
space $(\setX, \cF, \mu)$ is sigma-finite
if there exist measurable sets $F_1, F_2, \ldots \in \cF_0$ with
$\mu (F_i) < \infty$ and $\setX = F_1 \cup F_2 \cup \cdots$ 
\citep[p.5010]{TaraldsenLindqvist16renyi}.
A measure space $(\setX, \cF, \mu)$ is sigma-finite if
$\cF$ is sigma-finite,
and $\mu$ is then also said to be sigma-finite
\citep[p.121]{RUDIN}. 

\subsection{Conditional probability space}
%\subsection{Bunch}

A bunch $\cB$ in a measurable space is a family of
measurable sets closed under finite unions that does not contain
the empty set, but contains a countable family $F_1, F_2, \ldots$ of sets whose union is the whole set \citep[p.38]{RENYI}.
A bunch $\cB$ is ordered if $B_1,B_2 \in \cB$ implies
$B_1 \subset B_2$ or $B_2 \subset B_1$.

\subsection{Statistical model}

A statistical model is a triple $(\Omega, X, \Theta)$
where the space $\Omega$ is a conditional probability space,
the data $X$ is a measurable function
$X: \Omega \into \Omega_X$, 
and the model parameter
$\Theta$ is a measurable function 
$\Theta: \Omega \into \Omega_\Theta$.
These definitions,
and the ones that follow,
are as given by \citet{SCHERVISH} except for choice of symbols
and the generalization given by assuming
that $\Omega$ is a conditional probability space.
There is one probability law $\pr$ defined on the sigma-algebra
$\cE$ of events in $\Omega$ - and all other concepts
are defined from the basic space $\Omega$
\citep[p.5011]{TaraldsenLindqvist16renyi}.

A statistic 
$Y = \phi(X) = \phi \circ X$ 
is a measurable function of the data 
and a parameter 
$\Gamma = \psi(\Theta) = \psi \circ \Theta$
is a measurable function
of the model parameter as
illustrated in equation~(\ref{eqStatMod})
\be{StatMod}
\scalebox{1.2}{
\begin{tikzcd}[row sep=normal, ampersand replacement=\&]
%\uncover<1->{ 
\&\Omega_\Theta \arrow[r, "\psi"] \& \Omega_\Gamma  \\
%}
%\uncover<2->{ 
(\Omega, {\cal E}, \pr) \arrow[ur, "\Theta"] 
\arrow[drr, "Y" near end] \arrow[dr, "X"'] 
\arrow[urr, "\Gamma"' near end] \& \& \\
%}
%\uncover<3->{ 
\&\Omega_X \arrow[r, "\phi"'] \& \Omega_Y
%}
\end{tikzcd}
}
\ee
A random quantity is
a measurable function $Z: \Omega \into \Omega_Z$,
and its law is defined by 
$\pr_Z (A) = \pr (Z \in A)$ where 
$(Z \in A) = \{\omega \st Z(\omega) \in A\}$.
We abuse notation here and interpret $\pr$
as one fixed representative of the equivalence class that
defines $\pr$ as a conditional measure. 
A random quantity is sigma-finite if its law is
sigma-finite. 
If the model parameter $\Theta$ is sigma-finte, 
then the conditional probabilities
$\pr^\theta_X (A) = \pr (X \in A \st \Theta = \theta)$
define a family of probability measures on
the sample space $\Omega_X$ indexed by
the model parameter $\theta$ in the 
model parameter space $\Omega_\Theta$. 
Likewise, if the data $X$ is sigma-finite,
then the posterior 
$\pr^x_\Theta (B) = \pr (\Theta \in B \st X = x)$
is a probability measure
on the model parameter space $\Omega_\Theta$.
The mappings
$\theta \mapsto \pr_X^\theta (A)$ and
$x \mapsto \pr_\Theta^x (B)$ are measurable for all events $A$,
but existence of families of probability measures
as claimed above requires
regularity assumptions:
It is sufficient to assume that
the sample space $\Omega_X$ and the model parameter space $\Omega_\Theta$ 
are Borel spaces \citep[p.619]{SCHERVISH}\citep[p.5011]{TaraldsenLindqvist16renyi}.

%\addcontentsline{toc}{section}{References} 

} % EOF vekk

\bibliographystyle{chicago}
%\bibliography{gt,jarle,bib,gtaralds}

\bibliography{bib,JABREFgtaralds}

%\printbibliography

\end{document}